\def\smallskip{\vspace{.15cm}}
\def\medskip{\vspace{.3cm}}
\def\text{\mbox}
\def\rh2{{\mathbb R}{\mathbb H}^2}
\def\ch2{{\mathbb C}{\mathbb H}^2}
\def\RP2{{\mathbb{RP}}^2}
\def\RP3{{\mathbb{RP}}^3}
\def\PGL3{PGL(3{,\mathbb R})}
\def\PGL4{PGL(4{,\mathbb R})}
\def\H2R{{\mathbb H}^2\times {\mathbb R}}
\def\interior{{\rm int}\thinspace}
\def\cl{\rm{cl}\thinspace}
\newtheorem{theorem}{Theorem}[section]
\newtheorem{lemma}[theorem]{Lemma}
\newtheorem{proposition}[theorem]{Proposition}
\newtheorem{addendum}[theorem]{Addendum}
\begin{document}

\title{Conservative Subgroup Separability for  Surfaces with Boundary.}
\author{Mark D. Baker}
\author{Daryl Cooper}
\today

\address{IRMAR,
Universit\'e de Rennes 1,
35042 Rennes Cedex, FRANCE}
\address{Department of Mathematics, University of California, Santa Barbara, CA 93106, USA}

\email[]{cooper@math.ucsb.edu}
\email[]{mark.baker@univ-rennes1.fr}

\thanks{Cooper was supported in part by NSF grant DMS--0706887 and CNRS}
\thanks{The authors thank IHP for hospitality during the completion of this paper.}

\begin{abstract} If $F$ is a surface with boundary, then a finitely generated subgroup without peripheral elements of  $G=\pi_1F$
  can be separated from finitely many other elements of $G$
by a finite index subgroup of $G$  corresponding to a finite cover $\tilde F$ with the same number of boundary components as $F$. 
\end{abstract}

\maketitle


Suppose $F$ is a compact, orientable surface with nonempty boundary.
A non-trivial element of $\pi_1(F)$ is {\em peripheral} if it is represented by a loop freely homotopic into $\partial F$.
A covering space $p:\tilde{F}\longrightarrow F$ is called {\em conservative} if $F$ and $\tilde F$
have the same number of boundary components: $|\partial F|=|\partial \tilde{F}|$.


\begin{theorem}[Main theorem]
\label{strongsepthm}
 Let $F$ be a compact, connected, orientable surface with $\partial F\ne \phi$ and $H\subset \pi_1(F)$ a finitely generated subgroup. Assume that no element of $H$ is peripheral. Given a (possibly empty) finite subset $B\subset\pi_1(F)\setminus H$, there exists a finite-sheeted cover $p:\tilde F  \longrightarrow F$ such that:
 \begin{itemize}
\item[i)] There is a compact, connected, $\pi_1$-injective subsurface $S\subset \tilde F$ such that $p_*(\pi_1S) = H$.
 \item[ii)] $p_*(\pi_1\tilde F)$ contains  no element of $B$.
\item[iii)] $\tilde{F}\setminus S$ is connected
and $incl_*: H_1(S)\rightarrow H_1(\tilde F)$ is injective.
\item[iv)] The covering is conservative.
\end{itemize}
\end{theorem}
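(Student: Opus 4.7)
Since $\partial F \neq \emptyset$, the group $\pi_1(F)$ is free, and $H$ is a finitely generated subgroup of a free group. By M.~Hall's theorem (equivalently, Scott's subgroup separability for surface groups with boundary), there is a finite cover $F_0 \to F$ in which $H$ is a free factor of $\pi_1(F_0)$, realized geometrically as the fundamental group of a compact $\pi_1$-injective subsurface $S_0 \subset F_0$. Refining the cover one arranges (ii), separation from $B$, and, via a further suitable abelian cover, (iii), connectedness of the complement of $S_0$ and injectivity of $H_1(S_0) \to H_1(F_0)$. These three properties are essentially classical. The genuinely new content is (iv).

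A finite cover $p : \tilde F \to F$ corresponding to $K = p_*\pi_1(\tilde F) \leq \pi_1(F)$ is conservative if and only if $\pi_1(F) = K \cdot P_i$ for every peripheral cyclic subgroup $P_i = \langle t_i \rangle$; equivalently, each $t_i$ acts as a single cycle on the right coset space $K \backslash \pi_1(F)$. My plan is therefore to pass from $F_0$ to a further finite cover $\tilde F$ satisfying this peripheral transitivity condition at every boundary simultaneously, while preserving an embedded lift of $S_0$. Concretely, for each $\partial_i F$ the preimages in $F_0$ are circles $\partial_{i,j} F_0$ with covering degrees summing to $[\pi_1(F):\pi_1(F_0)]$; one merges them by taking several copies of $F_0$ and cyclically regluing along the $\partial_{i,j} F_0$, arranging that the resulting surface is again a finite cover of $F$ with exactly one boundary circle over each $\partial_i F$. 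Group-theoretically, this amounts to constructing a homomorphism from $\pi_1(F)$ into a finite group in which each peripheral generator $t_i$ maps to a full cycle in a suitable permutation representation, then intersecting the kernel with $\pi_1(F_0)$. One distinguished copy of $S_0$ inside this new cover serves as the required $S$.

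The main obstacle is coordinating (iv) with (i)--(iii) simultaneously: peripheral transitivity must hold at every boundary at once, and an embedded subsurface with fundamental group exactly $H$, connected complement, and $H_1$-injective inclusion must survive into the final cover. The crucial enabling fact is that $H$ contains no peripheral elements, so the boundary-merging surgery affects only data near $\partial \tilde F$ and leaves the interior region carrying $S_0$ essentially untouched; a chosen lift of $S_0$ therefore retains fundamental group $H$ in $\tilde F$. After carrying out the merging one may need an additional abelian cover to re-establish (iii) for the new ambient surface and to preserve the separation of $B$; the bulk of the technical work is to check that all four properties persist simultaneously through each step of the construction, and that the various cover refinements can be stacked without destroying what has already been achieved.
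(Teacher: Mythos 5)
Your reduction of (iv) to a coset condition is correct: the number of boundary circles of the cover corresponding to $K\leq G=\pi_1(F)$ lying over $\partial_iF$ is the number of $\langle t_i\rangle$-orbits on $K\backslash G$, i.e.\ the number of double cosets $K\backslash G/P_i$. But the construction you propose to achieve it --- find a homomorphism $\phi$ of $G$ onto a finite group sending each $t_i$ to a ``full cycle,'' then intersect $\ker\phi$ with $\pi_1(F_0)$ --- fails at the intersection step. Transitivity of $\langle t_i\rangle$ on cosets is not inherited by intersections: if $t_i$ acts as a single $n$-cycle on $K_1\backslash G$ and with orbit lengths $m_1,\dots,m_r$ on $K_2\backslash G$ (here $K_2=\pi_1(F_0)$, which is typically very far from conservative), then on $(K_1\cap K_2)\backslash G$ the orbit through a point lying over a pair of orbits has length $\mathrm{lcm}(n,m_j)$, and the number of orbits is generically much larger than $1$. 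For regular covers this is already visible: the number of preimages of $\partial_iF$ is $[G:K]/\mathrm{ord}(\phi(t_i))$, and deepening the cover to lie inside $\pi_1(F_0)$ inflates the index without a matching increase in the order of the image of $t_i$. So the subgroup you end up with gives no control on $|\partial\tilde F|$. The same objection applies to your closing remark that one may take ``an additional abelian cover to re-establish (iii)'' after the merging: any further cover taken at that stage will generically destroy (iv) again, so the proposed order of operations is circular.

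The missing idea --- and the actual content of the paper --- is a boundary-merging operation that is \emph{not} a passage to a deeper cover and that provably preserves (i)--(iii) at every step: the cross-join. One cuts a cover along two disjoint lifts of a properly embedded arc of $F$ and reglues the sides crosswise; this produces a new cover of the \emph{same} degree (or degree increased by one, when one cross-joins $\tilde F$ with a disjoint copy of $F$ itself), reduces $|\partial|$ by $2$ under the hypotheses of Lemma~\ref{cross-joinboundarycount}, and, because the arcs are chosen short and then pushed far from $S$ and from each other inside a large conservative cyclic cover (Lemmas~\ref{arclemma} and~\ref{bigcover}), keeps $S$ embedded with connected complement --- so (i)--(iii) never need to be ``re-established.'' There is also parity bookkeeping you do not address: cross-joins change $|\partial\tilde F|$ by $2$, so one must first arrange that the excess $|\partial\tilde F|-|\partial F|$ is even (the paper does this via the $H_1(F,S;\mathbb Z/2)$ cover and, if needed, one cross-join with the base), and the endgame when a single component of $\partial F$ has an odd number $2k+1$ of preimages requires the cyclic cross-join of Figure~\ref{ringpic}, whose connectedness rests on $\gcd(2,2k+1)=1$. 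Your intuition that the surgery ``affects only data near $\partial\tilde F$'' because $H$ has no peripheral elements is the right heuristic, but without the cross-join mechanism and the disjointness/parity control there is no proof.
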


This theorem, without (iii), is due to Masters and Zhang \cite{MZ1} and is a key ingredient in their proof
that cusped hyperbolic $3$-manifolds contain quasi-Fuchsian surface groups \cite{MZ1}, \cite{MZ2}.
Without  (iii), (iv)  the theorem is a special case of well-known theorems on subgroup 
separability of free groups \cite{H} 
and surface groups \cite{S}, \cite{S2}.  For a discussion of subgroup separability and 3-manifolds, see \cite{LongReid}.
   
The proof in \cite{MZ1}
uses the folded graph techniques due to Stallings, see  \cite{KM}. 
The shorter proof below uses cut and cross-join of surfaces. 
A cover is called {\em good} if properties (i)-(ii) hold and {\em very good} if (i)-(iii) hold.
The idea is to start with a good cover and then 
pass to a second cover which is very good. 
Then {\em cross-join operations} (defined below) are used to reduce the number of boundary
components of a very good cover until it is conservative. 

 \section{Constructing a Very Good Cover}
 
We first explain a geometric condition on a cover which ensures  it is good, and then 
use \ref{pi1injectivesurface} to construct a very good cover.

Choose a basepoint $x$ in the interior of $F$
and suppose $p:\tilde{F}\longrightarrow F$ is the cover corresponding to $H$. There is a compact, connected,
 incompressible subsurface $S$ in the interior of $\tilde{F}$  
 which is a retract of $\tilde{F}$ and which contains a lift $\tilde{x}$
 of $x$. 
 Each element  $g\in\pi_1(F,x)$ determines a unique  lift $\tilde{x}(g)\in\tilde{F}$ of the basepoint $x$. 
The surface $S$ can be chosen large enough to contain $\{\ \tilde{x}(b):b\in B\ \}$. 
Then $p|_S:S\longrightarrow F$ is a local homeomorphism. 
The work of M. Hall \cite{H} and P. Scott \cite{S} shows there is a  finite cover
$F'\longrightarrow F$ such that $S$ lifts to an embedding in $F'$.

If $\pi:F'\longrightarrow F$ is any cover
and there is a lift of $p|_S$ to $\theta:S\longrightarrow F'$ (thus $\pi\circ\theta=p|_S$) which is injective,
 we say {\em $S$ lifts to an embedding in the cover $F'$}.
\begin{proposition}[good cover]\label{first2} With the hypotheses of the main theorem,
 if $\pi:F'\longrightarrow F$ is any cover and $S$ lifts to an embedding in $F'$, then the cover is good.
\end{proposition}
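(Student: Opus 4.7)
My plan is to set $S' := \theta(S) \subset F'$, equip $F'$ with the basepoint $\theta(\tilde{x})$, and verify that $S'$ witnesses property (i) while no element of $B$ lies in $\pi_{*}(\pi_{1}(F',\theta(\tilde{x})))$ (property (ii)).

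Property (i) should follow formally. Because $\theta$ is an embedding, $S'$ is a compact, connected subsurface homeomorphic to $S$. The identity $\pi \circ \theta = p|_{S}$ gives
\[
\pi_{*}(\pi_{1} S') \;=\; \pi_{*}\theta_{*}(\pi_{1} S) \;=\; (p|_{S})_{*}(\pi_{1} S) \;=\; p_{*}(\pi_{1} S) \;=\; H,
\]
using that $S \hookrightarrow \tilde{F}$ induces an isomorphism on $\pi_{1}$ (since $S$ is a retract of $\tilde{F}$ containing the basepoint, and $\tilde{F}$ is aspherical) together with $p_{*}(\pi_{1}\tilde{F}) = H$. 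Since the composition $\pi_{1} S' \to \pi_{1} F' \to \pi_{1} F$ equals the injective map $(p|_{S})_{*}$, the inclusion $S' \hookrightarrow F'$ is $\pi_{1}$-injective.

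For (ii), fix $b \in B$. Since $b \notin H$, the $p$-lifts $\tilde{x}$ and $\tilde{x}(b)$ of $x$ are distinct points of $\tilde{F}$; both lie in $S$ by choice of $S$, and because $\theta$ is injective, $\theta(\tilde{x})$ and $\theta(\tilde{x}(b))$ are distinct lifts of $x$ in $F'$. I will argue that the $\pi$-lift of $b$ beginning at $\theta(\tilde{x})$ terminates at $\theta(\tilde{x}(b))$; it is then not closed, forcing $b \notin \pi_{*}(\pi_{1}(F',\theta(\tilde{x})))$.

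The main obstacle is that the $p$-lift $\tilde{b}$ of $b$ to $\tilde{F}$, which runs from $\tilde{x}$ to $\tilde{x}(b)$, need not lie inside $S$, so it cannot be pushed forward by $\theta$ directly. To handle this I would use the isomorphism $\pi_{1} S \xrightarrow{\sim} \pi_{1}\tilde{F}$: pick any path $\sigma$ in $S$ from $\tilde{x}$ to $\tilde{x}(b)$, so that $\tilde{b} * \bar{\sigma}$ is a loop in $\tilde{F}$ at $\tilde{x}$, hence homotopic rel $\tilde{x}$ to a loop $\alpha$ lying in $S$. Then $\tilde{b}$ is homotopic rel endpoints to the path $\alpha * \sigma$ in $S$, so $\theta(\alpha * \sigma)$ is a path in $F'$ from $\theta(\tilde{x})$ to $\theta(\tilde{x}(b))$ whose $\pi$-image is path-homotopic to $b$. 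By uniqueness of path-lifting this coincides, up to path-homotopy, with the $\pi$-lift of $b$ at $\theta(\tilde{x})$, which therefore ends at $\theta(\tilde{x}(b)) \ne \theta(\tilde{x})$, as required.
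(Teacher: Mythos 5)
Your proof is correct and follows the same line as the paper's: the lift of a loop representing $b$ to $F'$ based at $\theta(\tilde x)$ terminates at $\theta(\tilde x(b))\ne\theta(\tilde x)$, so $b\notin\pi_*(\pi_1F')$. The paper states this in one sentence; you additionally justify the implicit step (homotoping the lift $\tilde b$ rel endpoints into $S$ before pushing forward by $\theta$), which is a worthwhile detail but not a different argument.
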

\begin{proof} With the notation above, 
a based loop representing an element  $b\in B$ lifts to a path in $F'$
that starts at the basepoint $\tilde{x}\in Y=\theta(S)$ but ends at some other point $\tilde{x}(b)\ne \tilde{x}$ 
 in $Y$.\end{proof}

 \begin{addendum}[very good cover]\label{sepadd} There is a very good cover  $\tilde{F}$ of finite degree with
 $|\partial \tilde F|$ is even.
 \end{addendum}
 \begin{proof} We start with a good cover $F'$ of $F$ with finite degree and the subsurface 
 $S\subset F'$ described above and then construct a cover
 of $F'$ with the required property.   For notational elegance, we rename the first cover $F'$  as $F$. 
Let $p:\tilde{F}\longrightarrow F$ be the regular cover given by the kernel of the map of $\pi_1F$ onto  $H_1(F,S;{\mathbb Z}/2)$. 
  There is a lift  $\tilde{S}$ of $S$ to this cover by construction. The connectedness of $\tilde{F}\setminus \tilde{S}$ and the injectivity of $incl_* : H_1(\tilde S) \rightarrow H_1(\tilde F)$ are shown in theorem \ref{pi1injectivesurface} below.
Finally, since $\partial F\ne\phi$ and $S$ has no peripheral elements, it follows that $H_1(F,S;{\mathbb Z}/2)\ne 0$  so the cover
has even degree. Thus $\chi(\tilde{F})$ is even, therefore $|\partial \tilde{F}|$ is even.
   \end{proof}


The following allows us to lift a $\pi_1$-injective subsurface to a regular cover where it is $H_1$-injective and non-separating.
\begin{theorem}\label{pi1injectivesurface}
Suppose $F$ is a compact, connected, orientable surface, possibly with boundary,
which contains  a compact, connected, subsurface  $S$. 
Assume  that no component of $\cl(F\setminus S)$ is a disc or a boundary parallel annulus.
Let $p : \tilde F \rightarrow F$ be the cover corresponding to the kernel of
the natural homomorphism of $\pi_1F$ onto $G=H_1(F,S;{\mathbb Z}/2)$. 
If  $\tilde S_0$ is a connected component of $p^{-1}(S)$ then  $\tilde{F}\setminus\tilde{S}_0$ is connected.
Hence $incl_* : H_1(\tilde S_0 )\rightarrow H_1( \tilde F)$ is injective.
\end{theorem}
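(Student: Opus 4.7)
The plan is to prove connectedness of $\tilde F\setminus\tilde S_0$ directly by a homological argument, from which the $H_1$-injectivity will follow as a short corollary. Let $\tilde\alpha_1,\ldots,\tilde\alpha_m$ denote the arc and circle components of $\partial\tilde S_0\cap\interior\tilde F$. The Mayer--Vietoris sequence for $\tilde F = \tilde S_0\cup\overline{\tilde F\setminus\tilde S_0}$ with $\mathbb{Z}/2$ coefficients, combined with Lefschetz duality on $\tilde F$, identifies the number of components of $\overline{\tilde F\setminus\tilde S_0}$ with the dimension of the kernel of the map
\[
(\mathbb{Z}/2)^m\longrightarrow H_1(\tilde F,\partial\tilde F;\mathbb{Z}/2),\qquad e_j\mapsto[\tilde\alpha_j].
\]
Since $\sum_j\tilde\alpha_j=\partial\tilde S_0$ modulo $\partial\tilde F$, the relation $\sum_j[\tilde\alpha_j]=0$ always lies in this kernel, so connectedness reduces to showing this is the only relation.

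By excision, $G = H_1(F,S;\mathbb{Z}/2)\cong\bigoplus_i H_1(C_i,A_i;\mathbb{Z}/2)$, where the $C_i$ are the components of $\overline{F\setminus S}$ and $A_i = C_i\cap S$; a short case analysis of the long exact sequence of each pair shows that $H_1(C_i,A_i;\mathbb{Z}/2) = 0$ precisely when $C_i$ is a disc or a boundary-parallel annulus, so by hypothesis every summand is non-zero. Any hypothetical extra relation $\sum_{j\in J}[\tilde\alpha_j]=0$ with $J\subsetneq\{1,\ldots,m\}$ proper and non-empty would be witnessed by a compact subsurface $W\subset\tilde F$ whose interior boundary equals $\bigcup_{j\in J}\tilde\alpha_j$. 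Since distinct $C_i$'s are separated by $S$ in $F$ (hence their lifts are separated by $p^{-1}(S)$ in $\tilde F$), $W$ must be a union of whole lifts of $S$ together with whole lifts of various $C_i$'s. The core of the argument --- and what I expect to be the main technical obstacle --- is then to exploit the deck $G$-action and the direct-sum decomposition of $G$ to force $J$ to be either empty or all of $\{1,\ldots,m\}$, contradicting its properness.

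Finally, once $\tilde F\setminus\tilde S_0$ is shown to be connected, the non-triviality of each summand of $G$ yields $|G|\geq 2$, so the unique component of $\overline{\tilde F\setminus\tilde S_0}$ meets $\partial\tilde F$ (otherwise all of $\partial\tilde F$ would have to lie in the single lift $\tilde S_0\cong S$, impossible once $|G|\geq 2$). By excision and Lefschetz duality, $H_2(\tilde F,\tilde S_0;\mathbb{Z}/2) = 0$, and the long exact sequence of the pair then yields the injectivity $\mathrm{incl}_*:H_1(\tilde S_0;\mathbb{Z}/2)\hookrightarrow H_1(\tilde F;\mathbb{Z}/2)$.
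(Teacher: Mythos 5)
There is a genuine gap: the connectedness of $\tilde F\setminus\tilde S_0$, which is the entire content of the theorem, is not actually proved. Your reduction via Mayer--Vietoris and Lefschetz duality to the statement ``the only relation among the $[\tilde\alpha_j]$ in $H_1(\tilde F,\partial\tilde F;\mathbb{Z}/2)$ is $\sum_j[\tilde\alpha_j]=0$'' is a reasonable reformulation, and the observation that each summand $H_1(C_i,A_i;\mathbb{Z}/2)$ is nonzero correctly locates where the disc/boundary-parallel-annulus hypothesis must enter. But at the decisive moment you write that the core is ``to exploit the deck $G$-action and the direct-sum decomposition of $G$ to force $J$ to be either empty or all of $\{1,\ldots,m\}$'' and you flag this as ``the main technical obstacle'' without carrying it out. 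That obstacle is not routine: the nonvanishing of $H_1(C_i,A_i;\mathbb{Z}/2)$ by itself does not rule out a proper bounding subset $J$. The paper's proof shows what is actually needed: (a) no component $Y$ of $\cl(\tilde F\setminus\tilde S_0)$ can project into $\cl(F\setminus S)$ --- if $Z=p(Y)$ meets $S$ in one circle this uses the nontriviality of the image of $H_1(Z)$ in $G$, but if $Z\cap S$ has two or more circles one needs a separate argument via a loop $\beta\cdot\gamma$ with odd intersection number with a circle of $\partial S$, whose lift then exits $\tilde S_0$; and (b) granted that every component of the complement surjects onto $S$, one still needs the regularity of the cover together with a minimal-area (or innermost) argument to conclude there is only one such component. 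Neither mechanism appears in your sketch, and the phrase ``$W$ must be a union of whole lifts of $S$ together with whole lifts of various $C_i$'s'' already glosses over the fact that components of $p^{-1}(C_i)$ may be nontrivial covers of $C_i$ rather than homeomorphic lifts --- distinguishing these two situations is exactly where the work lies.

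A secondary problem sits in your last paragraph. When $\partial F=\emptyset$ (allowed by the hypotheses) your claim that the unique complementary component meets $\partial\tilde F$ is vacuously false, and then $H_2(\tilde F,\tilde S_0;\mathbb{Z}/2)\cong H^0(X;\mathbb{Z}/2)\neq 0$, so your stated reason for injectivity collapses; one must instead note, as the paper implicitly does, that the boundary map $H_2(\tilde F,\tilde S_0)\to H_1(\tilde S_0)$ sends the generator to $[\partial\tilde S_0]=0$. This is repairable, but as written the final step does not cover all cases, and in any event the theorem cannot be considered proved until the connectedness argument is supplied.
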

\begin{proof} We may assume $S\ne F$.
Define $X = $ \rm{cl} $(\tilde F \setminus \tilde S_0)$.
Let $Y$ be a connected component of $X$.  
We claim  that  $p(Y)\supset S$. Otherwise $p|_Y:Y\longrightarrow \cl(F\setminus S)$. Since $p|(Y\cap \tilde{S}_0)$ is injective it follows that $p|Y$ is injective,
thus $Y$ is 
 a lift of a component $Z$ of $\rm{cl}(F\setminus S)$.  
 
 If $Z\cap S$ is connected, then since $Z$ is not a disc or boundary parallel annulus, the image of 
 $H_1(Z)$ in $G$ is not trivial. 
 Thus $Z$ does not lift to the $G$-cover, a contradiction.
  
Hence $Z\cap S$ contains at least two distinct circle components $B_1,B_2$.
 There is a loop $\alpha=\beta\cdot\gamma\subset F$ which is the union of  two arcs connecting $B_1$ and $B_2$: 
 one arc  $\beta\subset Z$ and one arc  $\gamma\subset S$. Since $\alpha$
 has non-zero algebraic intersection number with the boundary component $B_1$ of $S$ 
 it is a non-zero element of $G$. 
 It follows  that the lift $\tilde{\beta}\subset\tilde{S}_0$ 
 of $\beta$ has endpoints in different components of $p^{-1}(S)$, since
 otherwise $\alpha$ would lift to a loop. But $\partial\tilde{\beta}\subset\partial Y\subset\partial\tilde{S}_0$
 which is a contradiction. Thus $p(Y)\supset S$.
 
 Choose some Riemannian metric on $F$. This metric pulls back to one on  $\tilde{F}$  
 which is preserved by covering transformations. 
If $X$ is not connected, let $Y$ be a component of smallest area.  
It follows that $Y$ contains some component $\tilde{S}_1\ne\tilde{S}_0$ of $p^{-1}(S)$ in its interior. 
However the cover is regular
so there is a covering transformation $\tau$ taking $\tilde{S}_0$ to $\tilde{S}_1$. 
Thus $\tau$ takes components of $\tilde{F}\setminus \tilde{S}_0$
to components of $\tilde{F}\setminus \tilde{S}_1$. One of these components 
contains $\tilde{S}_0$ so the other ones are strictly 
contained in $Y$ which contradicts that $Y$ has minimal area. Hence $X=Y$ is connected.

For the last conclusion, apply Mayer-Vietoris to $\tilde F = \tilde S_0 \cup X$ with 
$ \tilde S_0 \cap X = \partial \tilde S_0 \cap \partial X$.
Since $X$ is connected, if  the kernel of  $i_*: H_1(\tilde S_0) \rightarrow H_1(\tilde F)$
 is nontrivial, then $\partial X\varsubsetneq \partial \tilde S_0$. This implies
$\partial X\cap\partial \tilde F=\phi$. Since $p(X)\supset S$ it follows that 
$\partial \tilde F=\phi$. But then $\partial\tilde S_0=\partial X$, hence the kernel is trivial.
\end{proof}

\section{Cross-Joining  Covers}

Suppose $F$ is a surface and $\alpha_1$ and
 $\alpha_2$ are disjoint arcs
 properly embedded in $F$. Let $N(\alpha_i)\equiv \alpha_i\times[-1,1]$ be disjoint regular neighborhoods
  of the arcs $\alpha_i$ in $F$ 
  such that $\alpha_i\equiv\alpha_i\times 0$ and $N(\alpha_i)\cap \partial F=(\partial\alpha_i)\times[-1,1]$.
  The sets $\alpha_i\times(0,\pm 1]\subset F$ are called the {\em $\pm$  sides} of $\alpha_i$.

 Given a homeomorphism $h:N(\alpha_1)\longrightarrow N(\alpha_2)$  taking the $+$ side
 of $\alpha_1$ to the $+$ side  of $\alpha_2$,  the {\em cross-join} of $F$ along $(\alpha_1,\alpha_2)$ is the surface
 $K$ defined as follows. The surface $F^-=F\setminus(\alpha_1\cup\alpha_2)$ 
 contains four subsurfaces
 $\alpha_i\times(0,\pm1]$.
   Let  $F^{cut}$ be the surface obtained
 by completing these subsurfaces to $\alpha_i\times[0,\pm1]$. Thus $F^{cut}$
 has two copies $\alpha_i^+,\alpha_i^-$ of $\alpha_i$ in $\partial F^{cut}$ and identifying these copies suitably
  produces $F$.
The surface $K$ is the quotient of $F^{cut}$ obtained by using
 $h$ to identify $\alpha_1^-$ to $\alpha_2^+$ and $\alpha_1^+$ to $\alpha_2^-$. Note that here
 we do not require $F$ to be connected, so that $\alpha$ and $\beta$ might be in different
 components of $F$. 
 
 There are two special cases of   cross-join which 
 will be used to change the number of boundary components of a surface:

 
 
 \begin{lemma}\label{cross-joinboundarycount} Suppose the compact surface
  $F$ contains two disjoint properly embedded arcs 
 $\alpha$ and $\beta$. In addition suppose:
 \begin{itemize}
\item[(1)] either $F$ is connected and the endpoints of $\alpha,\beta$ lie on four distinct components of $\partial F$
\item[(2)] or  $F$ is the union of two connected components $A$ and $B$ and $\alpha\subset A$ 
  has both endpoints on the same boundary component
 and $\beta\subset B$ has endpoints on distinct boundary components. 
 \end{itemize}
   Then a surface $K$ obtained by cross-joining
along these arcs  has $|\partial K|=|\partial F|-2$. Furthermore 
 $\chi(K)=\chi(F)$ and  $K$ is connected.
  \end{lemma}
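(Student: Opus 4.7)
The plan is to track how $\chi$, the boundary count, and the number of connected components change through the two cuts and the two arc identifications that together form the cross-join. Write $F^{cut}$ for the surface obtained by cutting $F$ along $\alpha$ and $\beta$, so that $K$ is obtained from $F^{cut}$ by identifying $\alpha^+\sim\beta^-$ and $\alpha^-\sim\beta^+$. Each cut replaces a disc neighbourhood of the arc by two disc strips, raising $\chi$ by $1$; each of the two cross-join identifications merges two boundary arcs into a single interior arc, dropping $\chi$ by $1$. Thus $\chi(K)=\chi(F)+2-2=\chi(F)$ is immediate.

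For the boundary count in case (1), cutting $\alpha$ merges $C_1$ and $C_2$ into a single new circle $D_\alpha\subset F^{cut}$ with cyclic edge sequence (arc of $C_1$, $\alpha^+$, arc of $C_2$, $\alpha^-$); analogously cutting $\beta$ produces $D_\beta$ from $C_3, C_4, \beta^\pm$, so $|\partial F^{cut}|=|\partial F|-2$. The cross-join then identifies the four arcs $\alpha^\pm,\beta^\pm$ in pairs. I would carry out the trace of the four resulting corner identifications to verify that the surviving $C_i$-arcs in $D_\alpha\cup D_\beta$ reassemble into exactly two circles of $\partial K$ --- one linking the $C_1$- and $C_3$-arcs, one linking the $C_2$- and $C_4$-arcs --- which preserves the count, giving $|\partial K|=|\partial F|-2$.

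For the boundary count in case (2), cutting $\alpha$ in $A$ splits the boundary circle of $A$ containing $\partial\alpha$ into two new circles $D_\alpha^+, D_\alpha^-$, while cutting $\beta$ in $B$ merges the two circles containing $\partial\beta$ into a single $D_\beta$; this gives $|\partial F^{cut}|=|\partial F|$. Now $\alpha^+, \alpha^-$ lie on distinct circles $D_\alpha^\pm$, whereas $\beta^+, \beta^-$ both lie on $D_\beta$, so each cross-join identification glues an arc on one of the $D_\alpha^\pm$ to an arc on a different boundary circle of the partially-assembled surface; each such gluing merges two boundary circles into one, and two gluings give $|\partial K|=|\partial F|-2$.

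For connectedness, in case (1) each of $\alpha, \beta$ joins two distinct boundary circles of the connected orientable surface $F$ and is therefore non-separating (it is nonzero in $H_1(F,\partial F;\mathbb{Z}/2)$ and hence dual to a closed loop meeting it transversely once), so $F^{cut}$, and its quotient $K$, is connected. In case (2), $B^{cut}$ is connected, and the two cross-join identifications respectively link each of $\alpha^+$ and $\alpha^-$ in $A^{cut}$ to a point of $B^{cut}$ (or of its merged successor), thereby joining both sides of $\alpha$ to $B^{cut}$, so $K$ is connected regardless of whether $\alpha$ separates $A$. The main obstacle I expect is the careful boundary trace in case (1): one must verify that the ``$+\!\leftrightarrow\!-$'' swap in the cross-join produces two boundary circles of $K$ rather than one, and this is precisely where the ``cross'' in the cross-join construction matters.
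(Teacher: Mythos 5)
Your proposal is correct and, where the paper gives an argument at all (namely the connectedness claims), it follows essentially the same route; the Euler characteristic and boundary-circle bookkeeping you supply is exactly the routine verification the paper leaves to the reader. The one step you defer --- tracing the corner identifications in case (1) --- does come out as you predict: since $p_1^{\pm}$ is identified with $h(p_1)^{\mp}$, the arcs $c_1=C_1\setminus\{p_1\}$ and $c_3=C_3\setminus\{h(p_1)\}$ share both endpoints in $K$ and close up into a single boundary circle, and likewise for $c_2,c_4$, giving two circles in place of the four $C_i$ and hence $|\partial K|=|\partial F|-2$.
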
  
  \begin{proof} We verify that $K$ is connected. In the first case this
  follows since the arcs do not disconnect the boundary components on which they have
  endpoints; therefore $F\setminus(\alpha\cup\beta)$ is connected. 
  In the second case it follows because $B\setminus\beta$ is connected, and every
  point in $K$ is connected to a point in this subset by an arc.\end{proof}
  

Suppose $p:\tilde{F}\longrightarrow F$ is a (possibly not connected) 
covering of surfaces and $\alpha$ is an arc properly embedded
in $F$. Suppose $\tilde{\alpha}_1$ and $\tilde{\alpha}_2$ are two distinct lifts of $\alpha$ to $\tilde{F}$; then
they are disjoint.
The map $p$ provides a homeomorphism between small regular neighborhoods of these
two arcs. Using this to cross-join  produces a surface $\tilde F'$ and since the identifications are
compatible with $p$ there is 
a covering map $p':\tilde{F}'\longrightarrow F$.  

An important special case is when $\tilde{F}$ is a $(d+1)$-fold cover
which is the disjoint union of  a $1$-fold cover $F_1\longrightarrow F$  
and some connected $d$-fold cover $F_d\longrightarrow F$. Then
cross-joining an arc in $F_1$ with one in $F_d$ produces a 
cover of degree $d+1$. 

 To produce a new cover $F'$ of $F$ by  a cross-join along two arcs   in some cover $\tilde F$
 requires the arcs are disjoint from each other. If $S$ is embedded in $\tilde{F}$ and 
 these arcs are also disjoint from $S$, then $S$ lifts to an embedding in $F'$, so
the cover $F'$ is  good.
We call the combination of these two properties the {\em disjointness condition}.

There is a metric condition, 
involving some arbitrary choice of Riemannian metric on $F$, that ensures the disjointness condition
is satisifed
and therefore that the new cover is good. The  next
lemma provides a uniform upper bound on the lengths of the arcs we will use to cross-join
 {\em in any cover} of $F$. 

  \begin{lemma}[short arcs]\label{arclemma} Suppose $F$ is a compact, connected surface with 
  a Riemannian metric such that the maximum distance between points in $F$ is $\ell$. If $\tilde{F}$ is a finite cover of 
  $F$ then\begin{itemize}
\item[1] If $A$ and $B$ are distinct components of $\partial F$ then there is an arc $\alpha$ in $F$ connecting them
   and $length(\alpha)\le \ell$.
\item[2] If some component $A$ of $\partial F$ has (at least) two pre-images in $\partial\tilde{F}$ 
then there is an embedded arc $\alpha$
  in $F$ of length at most $2\ell$ which lifts to an arc with endpoints on distinct pre-images of $A$.
  \end{itemize}
\end{lemma}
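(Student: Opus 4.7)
I plan to prove the two parts separately, each by a short length argument built on the diameter bound $\ell$. For part (1), I would pick any points $a\in A$ and $b\in B$ and take a length-minimising path $\alpha$ from $a$ to $b$ in $F$: such a path exists by compactness, has length at most $\ell$ by the diameter hypothesis, and is automatically a properly embedded arc from $A$ to $B$, since any self-intersection or any interior contact with $\partial F$ would permit a strict shortcut.

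For part (2), I would equip $\tilde F$ with the lifted Riemannian metric, so that $p$ is a local isometry, and consider the components $\tilde A_1,\ldots,\tilde A_k$ of $p^{-1}(A)$, with $k\ge 2$. The central step is to show that the minimal intercomponent distance
\[
D \;=\; \min_{i\ne j}\, d_{\tilde F}(\tilde A_i,\tilde A_j)
\]
satisfies $D\le 2\ell$. I would argue by contradiction. Assume $D>2\ell$ and let $\tilde\gamma$ be a shortest arc in $\tilde F$ realising $D$, say between $\tilde A_i$ and $\tilde A_j$; its midpoint $\tilde m$ then satisfies $d_{\tilde F}(\tilde m,\tilde A_i)=d_{\tilde F}(\tilde m,\tilde A_j)=D/2>\ell$. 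Projecting $\tilde m$ to $m=p(\tilde m)\in F$, the diameter bound provides a path of length at most $\ell$ in $F$ from $m$ to some point of $A$; lifting this path at $\tilde m$ terminates at a point $\tilde a\in p^{-1}(A)$ lying in some component $\tilde A_l$. Because $d_{\tilde F}(\tilde m,\tilde a)\le\ell<D/2$, the component $\tilde A_l$ must differ from both $\tilde A_i$ and $\tilde A_j$. But then the triangle inequality yields
\[
d_{\tilde F}(\tilde A_l,\tilde A_i)\;\le\;\ell+D/2\;<\;D,
\]
contradicting the minimality of $D$.

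With $D\le 2\ell$ established, I would take a shortest arc $\tilde\gamma$ in $\tilde F$ between two distinct components of $p^{-1}(A)$ and set $\alpha=p\circ\tilde\gamma$. Then $\alpha$ is an arc in $F$ of length at most $2\ell$ with both endpoints on $A$, and its lift starting at $\tilde\gamma(0)$ is $\tilde\gamma$ itself, whose endpoints lie on distinct components of $p^{-1}(A)$ by construction. The remaining point, which I expect to be the main technical obstacle, is embeddedness of $\alpha$ in $F$: the minimiser $\tilde\gamma$ is automatically embedded in $\tilde F$, but its projection could have transverse self-intersections at points of $F$ whose preimages are both traversed by $\tilde\gamma$. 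I would address this either by a small generic-position perturbation of the endpoints of $\tilde\gamma$ along $\tilde A_i,\tilde A_j$, or by a direct surface-topology argument removing self-intersections of $\alpha$ while preserving both the length bound $2\ell$ and the property that some lift still has endpoints on distinct components of $p^{-1}(A)$.
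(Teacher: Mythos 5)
Your reduction of part (2) to finding an arc of length at most $2\ell$ in $\tilde F$ joining two distinct components of $p^{-1}(A)$ is sound, and your midpoint/triangle-inequality proof that $D\le 2\ell$ is a correct (slightly different) substitute for the paper's argument, which instead notes that every point of $\tilde F$ lies within $\ell$ of $p^{-1}(A)$, so by connectedness some point lies within $\ell$ of two distinct components. The genuine gap is exactly where you flag it: the embeddedness of $\alpha=p\circ\tilde\gamma$ in $F$. Neither of your proposed fixes works as stated. A small perturbation of the endpoints of $\tilde\gamma$ cannot help, because a transverse self-intersection of the projected arc is \emph{stable} under small perturbations; general position gives you transverse double points, not embeddedness. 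The cut-and-paste strategy is also not routine: if $\alpha(s)=\alpha(t)$ with $s<t$ and you excise the loop $\alpha|_{[s,t]}$, the lift of the shortened arc starting at $\tilde\gamma(0)$ agrees with $\tilde\gamma$ only up to time $s$ and then continues along \emph{some other} lift of $\alpha|_{[t,\cdot]}$ based at $\tilde\gamma(s)\ne\tilde\gamma(t)$; since the cover need not be regular, this continuation may terminate on the same component of $p^{-1}(A)$ you started from, destroying the one property you must preserve. So ``shorten until embedded'' does not obviously terminate with an arc of the required kind.

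The paper closes this gap with a quantitative argument you should adopt in place of both suggestions. Let $\gamma:[0,2R]\to\tilde F$ be a \emph{shortest} arc joining two distinct components of $p^{-1}(A)$, parameterized by arc length (so $R\le\ell$). Minimality forces
$$d_{\tilde F}(\gamma(t),p^{-1}(A))=\min(t,\,2R-t)$$
for all $t$: if some $\gamma(t)$ admitted a shorter path to $p^{-1}(A)$, then concatenating that path with whichever half of $\gamma$ ends on a \emph{different} component would produce a shorter arc between distinct components. Since $p$ is a local isometry and paths from $p(\gamma(t))$ to $A$ lift to paths from $\gamma(t)$ to $p^{-1}(A)$ of the same length, this identity descends to $d_F(p(\gamma(t)),A)=\min(t,2R-t)$. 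Thus the distance to $A$ in $F$ recovers the arc-length parameter $t$ up to the reflection $t\mapsto 2R-t$, which is what rules out self-intersections of $p\circ\gamma$ and yields the embedded short arc. This is the step your proposal is missing.
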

\begin{proof}  The first claim is obvious. For the second claim, since every point in $\tilde{F}$ is within a distance at most $\ell$ 
of some point in $p^{-1}(A)$ and $\tilde F$ is connected, some point in $\tilde F$ is within a distance at most $\ell$ of points in
two distinct components of $p^{-1}(A)$. This gives an arc $\beta$ in $\tilde{F}$ of length at most $2\ell$ which
connects two distinct components of $p^{-1}(A)$. 

Let $\gamma:[0,2R]\longrightarrow \tilde{F}$ be a shortest arc 
connecting two distinct components of $p^{-1}(A)$ and parameterized by arc length. Then $R\le \ell$.
To complete the proof we show that $\gamma$ projects to an embedded arc in $F$.
Observe that $$d_{\tilde{F}}(\gamma(t),p^{-1}(A))=\min(t, 2R-t)$$ otherwise there is a shorter arc connecting two distinct components of $p^{-1}(A)$.
It follows that $$d_F(p(\gamma(t)),A)=\min(t, 2R-t)$$
This means that the distance in $F$ of a point on $p\circ\gamma$ from $A$ is given by arc length along $p\circ \gamma$. It follows
  that $\alpha=p\circ \gamma$ is the required embedded arc.   \end{proof}

An arc of length at most $2\ell$ is called {\em short}.  The next lemma provides a 
conservative cyclic cover with large diamater
of a  surface $F$.
If a short arc in $F$ connects two distinct boundary components, then so does
every covering translate of it. If $S$ lifts to the cover then there are many different translates
of the short  arc  that are far from each other 
and far from the lift of $S$. 
In particular the disjointness condition is satisfied by suitable translates of a lifted short
arc in this cover.

\begin{lemma}[big covers] \label{bigcover} Suppose $F$ is a compact connected surface
with $k\ge 2$ boundary components and which contains a compact, connected, incompressible subsurface $S\subset\interior(F)$ with $F\setminus S$ connected.
 Given $n>0$ there is a conservative finite cyclic cover $\tilde{F}\longrightarrow F$
of degree bigger than $n$ and a lift,  $\tilde{S}$, of $S$ to $\tilde F$.
Furthermore $\tilde{F}\setminus\tilde{S}$ is connected.
\end{lemma}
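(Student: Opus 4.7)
The plan is to build a large cyclic cover using a class $\phi\in H^1(F;\ZZ)$ that vanishes on $H_1(S)$ and is nonzero on every boundary component of $F$.

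First, since $W=\cl(F\setminus S)$ is connected and contains $c_1,\ldots,c_k=\partial F$ among its boundary circles, I would produce by induction on $k$ pairwise disjoint properly embedded arcs $\alpha_2,\ldots,\alpha_k\subset W$ with $\alpha_i$ joining $c_1$ to $c_i$: pick $\alpha_2$, cut $W$ along it (which merges $c_1$ and $c_2$ while keeping the surface connected), then repeat in the cut surface. Let $\phi\in H^1(F;\ZZ)$ be the class Poincar\'e--Lefschetz dual to $[\alpha_2]+\cdots+[\alpha_k]\in H_1(F,\partial F;\ZZ)$; concretely $\phi(\gamma)=\gamma\cdot(\alpha_2+\cdots+\alpha_k)$ for cycles $\gamma$ transverse to the arcs. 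Since every $\alpha_i$ is disjoint from $S$, $\phi$ vanishes on $H_1(S)$ (and thus on $\pi_1 S$). Pushing each $c_j$ slightly into $\interior F$ and counting endpoints with consistent signs gives $\phi(c_1)=k-1$ and $\phi(c_j)=-1$ for $j\ge 2$; in particular $\phi$ is surjective onto $\ZZ$ and $\phi(c_i)\ne 0$ for every $i$.

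Given $n$, pick a prime $m>\max(n,k-1)$, so $\gcd(m,k-1)=1$, and let $\phi_m:\pi_1 F\to\ZZ/m$ be the reduction of $\phi$. Its kernel defines a cyclic cover $p:\tilde F\to F$ of degree $m>n$; since $\phi_m|_{\pi_1 S}=0$, the surface $S$ lifts to $\tilde F$ and we fix one lift $\tilde S$. Each $\phi_m(c_i)$ is a unit of $\ZZ/m$, so $c_i$ lifts to a single boundary circle of $\tilde F$, giving $|\partial\tilde F|=k=|\partial F|$; the cover is conservative.

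For the connectedness of $\tilde F\setminus\tilde S$: since $c_2\subset W$ and $\phi_m(c_2)$ generates $\ZZ/m$, the restriction $\phi_m|_{\pi_1 W}$ is surjective, so $p^{-1}(W)$ is connected. The preimage $p^{-1}(S)$ is a disjoint union of $m$ homeomorphic copies $\tilde S=\tilde S_1,\tilde S_2,\ldots,\tilde S_m$ of $S$ (the cover is trivial over $S$). For each $j\ne 1$, the nonempty boundary $\partial\tilde S_j\subset p^{-1}(\partial S)\subset p^{-1}(W)$ glues $\tilde S_j$ to $p^{-1}(W)$, so $\cl(\tilde F\setminus\tilde S)=p^{-1}(W)\cup\tilde S_2\cup\cdots\cup\tilde S_m$ is connected, whence so is $\tilde F\setminus\tilde S$.

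The main obstacle is arranging $\phi$ to be nonzero on \emph{every} boundary circle while killing $H_1(S)$; the star-of-arcs construction handles this, and the remaining verifications reduce to tracking how cyclic-cover degrees over each $c_i$ depend on $\phi_m(c_i)$.
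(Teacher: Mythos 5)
Your proof is correct and takes essentially the same route as the paper: both pass to a cyclic cover of large prime degree defined by a homomorphism that kills $H_1(S)$ and sends one component of $\partial F$ to $k-1$ and the other $k-1$ components to $-1$, so that every boundary circle maps to a unit of $\ZZ/p$. The only difference is cosmetic — you realize this homomorphism concretely as the class dual to a star of disjoint arcs in $\cl(F\setminus S)$, whereas the paper produces it abstractly as an epimorphism of $H_1(Y)$ where $Y$ is $F\setminus\interior(S)$ with discs glued along $\partial S$.
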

\begin{proof} Let $Y$ be the surface obtained from $F\setminus\interior(S)$ by gluing
a disc onto each component of $\partial S$.   Then $Y$ is a connected surface
with  $k$ boundary components and there is a natural isomorphism of $H_1(F)/H_1(S)$ onto $H_1(Y)$. Choose a prime $p>\max(k,n)$.
Because $Y$ is connected, there is an epimorphism from $H_1(Y)$ onto ${\mathbb Z}/p$ which sends one component of
$\partial Y$ to $k-1$ and all the other $(k-1)$ components of
 $\partial Y$ to $-1$. 
Now $(k-1)$ is coprime to $p$ because  $2\le k<p$. Therefore this defines a {\em conservative} cyclic $p$-fold cover $\tilde Y$
of $Y$. It also determines a conservative cyclic $p$-fold cover
 of $F$ such that $S$ lifts. Since ${\tilde Y}$ is connected it follows that $\tilde{F}\setminus \tilde{S}$
 is connected.
    \end{proof}
  

\section{Proof of main theorem}
In this section all covers are of finite degree.
 Given a  cover $p:\tilde{F}\longrightarrow F$ the {\em excess} number of boundary components $E(p)$ for this
cover is defined as $E(p)=|\partial \tilde F| - |\partial F|$.
By \ref{sepadd} there is a very good cover $p: \tilde F \rightarrow F$ with $|\partial \tilde F|$ is even.
If  $E(p)=0$ the theorem is proved.  

We first use \ref{bigcover} 
to replace a very good cover cover by
another very good cover with the same excess  
and where there are lifts of a short arc that are far apart. Then we change the
cover with a cross-join that reduces the excess. To apply \ref{bigcover} 
requires that $F\setminus S$ is {\em connected}. 
We must verify this property continues to hold after the cross-join so the process can be repeated.
First observe that the cyclic cover produced by \ref{bigcover} leaves $F\setminus S$ connected.

In each case  (except the last one) we will
use one of the two cross-joins described in \ref{cross-joinboundarycount} to produce
a new {\em connected} cover $F'$ of $F$ and a lift $\tilde{S}$ of $S$.
Since the cross-join arcs are disjoint from $S$ they also determine a {\em connected} cover of $F\setminus S$.
Thus $F'\setminus\tilde{S}$ is connected, as required. 
This implies $incl_*: H_1(\tilde{S})\rightarrow H_1(F')$ is injective, so the new cover is also very good.

\medskip
{\em  \noindent  Case  when $|\partial F|=1$.}\\
By \ref{arclemma} there is a properly embedded,  short arc, $\alpha$, in $F$ 
which is covered by an arc $\beta$ with endpoints on two distinct
boundary circles of $\partial \tilde F$. 
There is a conservative cyclic cover  of $\tilde{F}$, which we also denote by $\tilde F$, to which $S$ lifts
 with diameter much larger than the length of $\beta$
and the diameter of $S$.
Thus there is a lift of $\beta$ which is disjoint from $S$. Since the cover is conservative
every lift of $\beta$ connects (the same pair of) distinct boundary components.

Cross-join $(\tilde F, \beta)$ with $(F,\alpha)$ to obtain a 
cover $F'$ with one fewer boundary circle than $\tilde F$. There is a lift
of $\tilde{S}$ to $F'$ and $F'\setminus \tilde{S}$ is connected.
 Repeat the process until the cover has only one boundary component. 
 This completes the proof when $|\partial F|=1$.
\medskip

{\em \noindent Case when $|\partial F| \geq 2$.}\\
First we show how to make $E(p)$ even by performing a  cross-join if needed.
This first step will increase the number of boundary components.

 Suppose $E(p)$ is odd. By \ref{sepadd}  $|\partial\tilde{F}|$ is even, so $|\partial F|$ is odd.
We can make $E(p$) even by
 cross-joining $(\tilde F,\tilde{\alpha})$ and $(F,\alpha)$ to obtain a cover $p':F'\longrightarrow F$.
    To perform the cross-join choose a short embedded arc $\alpha \subset F$
with endpoints on two distinct circles $C$, $C'$ of $\partial F$. Choose a lift of 
 $\tilde{\alpha} \subset \tilde F$,  with endpoints on two preimages $\tilde C$, $\tilde C'$. 
By the big cover lemma \ref{bigcover} we can choose $\tilde{\alpha}$ disjoint from $S$ in $\tilde F$. 
Then $F'$ is the  cross-join of $(F,\alpha)$ and $({\tilde F},{\tilde\alpha})$. The surface $S$ lifts
to $F'$ and $F'\setminus S$ is connected by \ref{cross-joinboundarycount}.

Here is the outline of the rest of the proof. If $E(p)\ne0$ then it is even. 
We proceed as follows using suitable cross-joins to construct
new coverings. If there are two different components $C,C'\subset\partial F$ which both have more
than one pre-image in $\partial\tilde{F}$ then we find a short arc $\alpha$ in $F$ connecting $C$ and $C'$ and cross-join
$\tilde{F}$ to itself along two suitable lifts of $\alpha$ in $\tilde{F}$. This reduces the excess by $2$. 
After finitely many steps we obtain a cover so that at most one component $C\subset\partial F$ has more
than one pre-image. A single {\em cyclic cross-join} (defined below) is done simultaneously to reduce the excess to zero.
Here are the details.

Suppose $A$ and $B$ are distinct circles in $\partial F$ which both have (at least) two distinct pre-images 
$\tilde{A}_i,\tilde{B}_i$ for $i=1,2$ in $\partial \tilde F$. 
Choose a short arc $\gamma$ in $F$ with endpoints on $A$ and $B$. Let $\alpha_i$ be
a lift of $\gamma$ with  one endpoint on $\tilde{A}_i$ and $\beta_i$ a lift
with an endpoint on $\tilde{B}_i$. Inductively we assume that $\tilde F\setminus S$
is connected. Replace $\tilde{F}$ by   a large cyclic conservative cover such that these
arcs are all far apart and far from $S$. 
Thus there is a cover obtained by cross-joining  along any
pair of distinct arcs chosen from this set of four and  $S$ lifts to this cover.

We claim that there is a pair of these arcs
which have endpoints on four distinct boundary components of $\tilde F$.
It follows from lemma \ref{cross-joinboundarycount} that cross-joining along
this pair reduces the excess by $2$ and $S$ lifts to the cover $F'$ so produced. 
Furthermore, since $\tilde{F}\setminus S$ is connected
 it follows that $F'\setminus S$ is connected by \ref{cross-joinboundarycount}.

If $\alpha_1$ and $\alpha_2$ do not both have endpoints on the same lift $\tilde{B}$ of $B$ the pair $(\alpha_1,\alpha_2)$ works.
Similarly if $\beta_1$ and $\beta_2$ do not both have endpoints on the same lift $\tilde{A}$ of $A$
the pair $(\beta_1,\beta_2)$ works. 
The remaining case is (after relabelling) $\alpha_1$ and $\alpha_2$ both have endpoints on 
a component $\tilde{B}\ne \tilde B_2$
which covers $B$ and
$\beta_1,\beta_2$ both have endpoints on some component $\tilde{A}\ne \tilde A_2$ which covers $A$.
 Then $\alpha_2$ connects $A_2$ to $\tilde{B}\ne \tilde B_2$ and $\beta_2$
connects $B_2$ to $\tilde{A}\ne \tilde A_2$. Thus the pair $(\alpha_2,\beta_2)$ works.

 Repeating this process a finite number of times reduces the excess by an even number until   either  
$ |\partial \tilde F| = |\partial F |$  or else there is a unique component $C$ of $\partial F$ with
 more than one pre-image. 
 In the latter case the excess is even so there is an an odd number of pre-images $p^{-1}(C)= \{C_0,...,C_{2k}\}$.
 
Refer to figure \ref{ringpic}.  Choose  a component 
$A$ of $\partial\tilde F$ that does not cover $C$. This is possible because $|\partial F|\ge 2$.
 Let $\beta$ be a short arc in $F$ with endpoints
on $p(A)$ and $C$.
For each $i$ there is a lift $\beta_i$ of $\beta$ with one endpoint on $C_i$
and the other on $A$. As before we may assume all these lifts are far apart and far from $S$. 
Orient each arc $\beta_i$  so it points from $A$ to $C_i$  and call the left side $+$  and the right side $-$. 
 Now cross-join cyclically as follows. Cut $\tilde F$ along the union of these arcs and 
 join the $-$ side of $\beta_i$ to the $+$ side of $\beta_{i+1}$, with all integer subscripts taken mod $2k+1$. 

\begin{figure}[ht]
 \begin{center}
 \psfrag{C}{$C_0$}
\psfrag{D}{$C_1$}
\psfrag{E}{$C_2$}
\psfrag{F}{$C_3$}
\psfrag{G}{$C_{4}$}
\psfrag{m}{$-$}
\psfrag{p}{$+$}
\psfrag{R}{$E_0$}
\psfrag{S}{$E_1$}
\psfrag{T}{$E_2$}
\psfrag{U}{$E_3$}
\psfrag{V}{$E_{4}$}
\psfrag{A}{$A$}
\psfrag{qa}{$u_0^+$}
\psfrag{qb}{$u_1^-$}
\psfrag{qc}{$u_1^+$}
\psfrag{qd}{$u_2^-$}
\psfrag{qe}{$u_2^+$}
\psfrag{qf}{$u_{3}^-$}
\psfrag{qg}{$u_{3}^+$}
\psfrag{qh}{$u_{4}^-$}
\psfrag{qi}{$u_{4}^+$}
\psfrag{qj}{$u_0^-$}
\psfrag{ba}{$\beta_0$}
\psfrag{bb}{$\beta_1$}
\psfrag{bc}{$\beta_2$}
\psfrag{bd}{$\beta_3$}
\psfrag{be}{$\beta_{4}$}
\psfrag{NN}{The preimage of $A$}
	 \includegraphics[scale=0.8]{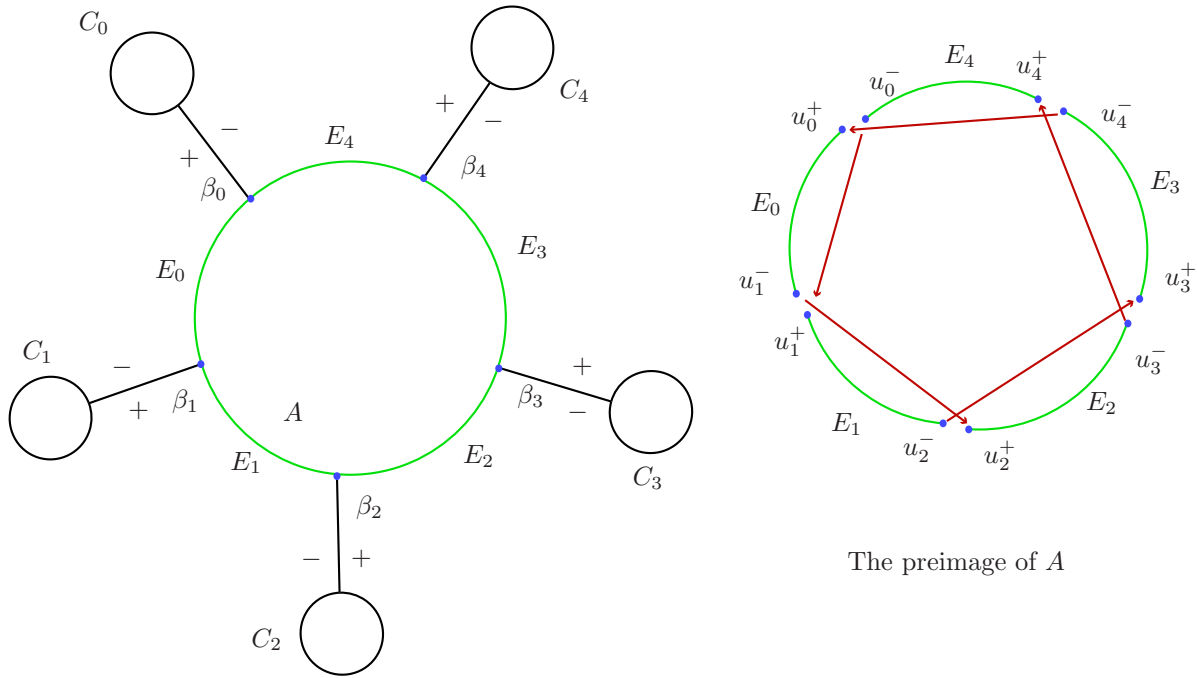}
 \end{center}
\caption{Cyclic cross-joining, $2k+1=5$ illustrated}	 \label{ringpic}
\end{figure}

  The resulting cover has a single pre-image of $C$. Indeed, each $C_i$ has been cut at one point to give an interval
$D_i=[t_i^+,t_i^-]$ where the label $i$ denotes an endpoint of $\beta_i$ and $t_i^{\pm}$ is 
on the $\pm$ side of $\beta_i$. 
These intervals are then
 glued by identifying $t_{i}^-$ in $D_i$ to $t_{i+1}^+$ in $D_{i+1}$.
 The result is obviously connected: a single circle.

To analyse the preimage of $p(A)$ the circle $A$ was cut at $2k+1$ points
 to produce $2k+1$ subarcs $E_i=[u_i^+,u_{i+1}^-]$ where $u_i^{\pm}$ is on the $\pm$ side of $\beta_i$. 
 Then  $E_{i}$ is glued  to $E_{i+2}$ by identifying $u_{i+1}^-$ with $u_{i+2}^+$ (see figure 1).
Since there are $2k+1$ intervals and the $i$'th one is glued to the $(i+2)$'th one the result is connected because $2$ is coprime to $2k+1.$ 
 This gives the required conservative cover completing the proof of the main theorem.

 \small
\bibliography{refs} 
\bibliographystyle{abbrv} 

\end{document}